\newtheorem{theorem}{Theorem}[section]
\newtheorem{lemma}{Lemma}[section]
\newtheorem{proposition}{Proposition}[section]
\theoremstyle{definition}
\newtheorem{example}[theorem]{Example}
\newcommand{\m}{\mathfrak}
\begin{document}

\pagestyle{plain}
\title{\bf On Common Divisors of Fox Derivatives with towards to Zero Divisors of Group Rings}

\maketitle

\begin{center}
Viktor Lopatkin\footnote{Laboratory of Modern Algebra and Applications, St. Petersburg State University, 14th Line, 29b, Saint Petersburg, Russia, St. Petersburg Department of Steklov Mathematical Institute. \\ The author is supported by the grant of the Government of the Russian Federation for the state support of scientific research carried out under the supervision of leading scientists, agreement 14.W03.31.0030 dated 15.02.2018. 1 \\ \textit{Email address:} \texttt{wickktor@gmail.com}}
\end{center}

\medskip
\begin{abstract}
Using Composition--Diamond Lemma we construct presentations of groups $G = \langle x_1,\ldots,x_n \, | \, r_1,\ldots, r_m \rangle$ with the following property; for a fixed $1 \le i \le n$, and for all $1 \le j \le m$, Fox derivatives ${\partial r_j / \partial x_i}$ have common divisor. It follows that in some cases the group ring $\mathbb{Z}[G]$ has zero divisors.

\end{abstract}

\section*{Introduction}

Let $\mathsf{F}$ be a field, and $G$ a group with torsion, say $g^n = 1$ with $1 < n < \infty$. Consider the group ring $\mathsf{F}[G]$. We then have
\[
 (1-g)(1+g-g^2 + \cdots + (-1)^{n-1}g) = 0
\]
in $\mathsf{F}[G]$.

Assume now that $G$ is a torsion-free group. \textit{Kaplansky's zero divisor conjecture} states the the group ring $\mathsf{F}[G]$ does not contain nontrivial zero divisors, that is, it is a domain.

In this paper we aim to construct groups with nontrivial zero divisors using methods of homological algebra. The main result is Theorem \ref{themainresult}.

\section{Preliminaries}

Let $G$ be a group is generated by $x_1,\ldots, x_n$ and is defined by relations $r_1,\ldots, r_m$, \textit{i.e.,} the $G$ is presented as follows (= a group presentation)
\[
 G = \langle x_1,\ldots, x_n \, | \, r_1,\ldots, r_m \rangle.
\]

Thus, $G \cong F/N$ where $F$ is the free group with basis $X:=\{x_1,\ldots, x_n\}$ and $N$ is the normal closure in $F$ of the set $\{r_1,\ldots, r_m\}$ of words in $X\cup X^{-1}$.

Further, as is customary, $2$-complexes will be specified by means of groups presentations. The \emph{cellular model} of a presentation of $G$ ()is the $2$-complex $\mathcal{K}_G$ that has a single $0$-cell, one $1$-cell for each generator $x_i$ and one $2$-cell fore each relator $r_j$. An orientation of the cells in the one-skeleton $\mathcal{K}_G^{(1)}$ determines an isomorphism $\pi_1\mathcal{K}_G^{(1)} \cong F$. The $2$-cell corresponding to a relator $r_j$ is attached along a based loop in the one-skeleton that spells the word $r_j$. The inclusion $\mathcal{K}_G^{(1)} \subseteq \mathcal{K}_G$ induces a surjection $F \to \pi_1 \mathcal{K}_G$ with kernel $N$. In particular, $\pi_1\mathcal{K}_G$ is canonically isomorphic to $G$, and so $\pi_2\mathcal{K}_G$ is a left $\mathbb{Z}[G]$-module under the homotopy action of $\pi_1 \mathcal{K}_G$.

Consider the $\mathbb{Z}[G]$-modules $\bigoplus_{i=1}^m\mathbb{Z}[G]$, $\bigoplus_{i=1}^n\mathbb{Z}[G]$. Define
\[
 \mathrm{d}_0: \bigoplus_{i=1}^n\mathbb{Z}[G] \to \mathbb{Z}[G]
\]
by setting
\[
 \mathrm{d}_0 :(\alpha_1, \ldots,\alpha_n)^T\mapsto \sum_{i=1}^n\alpha_i (x_i-1).
\]

Next, define
\[
 \mathrm{d}_1: \bigoplus_{i=1}^m\mathbb{Z}[G] \to \bigoplus_{i=1}^n\mathbb{Z}[G]
\]
by setting
\[
 \mathrm{d}_1: (\beta_r)_{r\in \{r_1,\ldots, r_m\}} \mapsto \sum_{r\in \{r_1,\ldots, r_m\}} (J_{rx}\beta_r)_{x \in X}, \qquad \beta_r \in \mathbb{Z}[G],
\]
where $J_{rx}$ is the image in $\mathbb{Z}[G]$ of the (left) partial derivative $\partial r / \partial x$ (= Fox derivative, see \cite[Sec. 5.15]{MKS}).

Further, the second homotopy $\mathbb{Z}[G]$-module $\pi_2(\mathcal{K}_{G})$ can be viewed as the kernel of the map $\mathrm{d}_1$. This was first observed by K. Reidemeister \cite{R}. More precisely, we have the following result.

\begin{theorem}\label{exact}
There is an exact sequence of $\mathbb{Z}[G]$-modules:
\[
 0 \to \pi_2(\mathcal{K}_G) \xrightarrow{\mathfrak{p}} \bigoplus_{i=1}^m\mathbb{Z}[G] \xrightarrow{\mathrm{d}_1} \bigoplus_{i=1}^n\mathbb{Z}[G] \xrightarrow{\mathrm{d}_0} \mathbb{Z}[G] \xrightarrow{\varepsilon} \mathbb{Z} \to 0,
\]
where $\varepsilon$ is the augmentation map.
\end{theorem}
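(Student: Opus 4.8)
The plan is to identify the displayed sequence with the augmented cellular chain complex of the universal cover $\widetilde{\mathcal{K}_G}$ of $\mathcal{K}_G$, on which $G\cong\pi_1\mathcal{K}_G$ acts freely by deck transformations. Since $\mathcal{K}_G$ has exactly one $0$-cell, $n$ one-cells, and $m$ two-cells, choosing one lift in $\widetilde{\mathcal{K}_G}$ of a cell from each orbit identifies the cellular chain groups with free $\mathbb{Z}[G]$-modules $C_0\cong\mathbb{Z}[G]$, $C_1\cong\bigoplus_{i=1}^n\mathbb{Z}[G]$, $C_2\cong\bigoplus_{j=1}^m\mathbb{Z}[G]$, and the cellular boundary maps $\partial_1,\partial_2$ become $\mathbb{Z}[G]$-linear. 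The first step is to compute these boundaries. Orienting the lift $\tilde e_i$ of the $1$-cell $x_i$ so that it runs from the base lift $\tilde v$ of the $0$-cell to $x_i\tilde v$, one gets $\partial_1\tilde e_i=(x_i-1)\tilde v$, i.e. $\partial_1=\mathrm{d}_0$ after these identifications. For $\partial_2$ one lifts the attaching loop of the $j$-th $2$-cell letter by letter, recording which oriented $1$-cell of $\widetilde{\mathcal{K}_G}$ is traversed at each step; the resulting coefficients obey precisely the recursive rules defining the Fox derivative, $\partial(uv)/\partial x_i=\partial u/\partial x_i+u(\partial v/\partial x_i)$ and $\partial x_i^{-1}/\partial x_i=-x_i^{-1}$, so that $\partial_2\tilde f_j=\sum_{i=1}^n\overline{\partial r_j/\partial x_i}\,\tilde e_i$, which is exactly $\mathrm{d}_1$.

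The second step is purely homological. Since $\widetilde{\mathcal{K}_G}$ is connected, the reduced homology $\widetilde H_0(\widetilde{\mathcal{K}_G})=0$, the relevant quotient map $C_0\cong\mathbb{Z}[G]\to\mathbb{Z}$ being the augmentation $\varepsilon$; this gives surjectivity of $\varepsilon$, exactness at $\mathbb{Z}$, and $\ker\varepsilon=\operatorname{im}\mathrm{d}_0$. Because $\widetilde{\mathcal{K}_G}$ is simply connected, $H_1(\widetilde{\mathcal{K}_G})=0$, which is exactness at $\bigoplus_{i=1}^n\mathbb{Z}[G]$, i.e. $\ker\mathrm{d}_0=\operatorname{im}\mathrm{d}_1$. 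As $\widetilde{\mathcal{K}_G}$ has no cells of dimension $>2$, we have $H_2(\widetilde{\mathcal{K}_G})=\ker\partial_2=\ker\mathrm{d}_1$; the Hurewicz theorem (valid since $\widetilde{\mathcal{K}_G}$ is $1$-connected) identifies this with $\pi_2(\widetilde{\mathcal{K}_G})$, and the covering projection induces an isomorphism $\pi_2(\widetilde{\mathcal{K}_G})\xrightarrow{\ \sim\ }\pi_2(\mathcal{K}_G)$, which is $\mathbb{Z}[G]$-linear because the deck-transformation action on the source corresponds to the $\pi_1$-action on the target. Defining $\mathfrak p$ as the composite of this isomorphism with the inclusion $\ker\mathrm{d}_1\hookrightarrow\bigoplus_{i=1}^m\mathbb{Z}[G]$ yields injectivity of $\mathfrak p$ and exactness at $\bigoplus_{i=1}^m\mathbb{Z}[G]$.

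I expect the only genuinely delicate point to be the Fox-calculus computation of $\partial_2$: one must track the left $\mathbb{Z}[G]$-action carefully (equivalently, the endpoints of the partial lifts of the attaching path) so that the prefix labels accumulate as group-ring coefficients in exactly the pattern dictated by the product rule for $\partial/\partial x_i$, and one must check that the formula obtained matches the left-derivative convention used in the definition of $\mathrm{d}_1$. Everything else is a direct invocation of covering-space theory and the Hurewicz theorem. This is in essence Reidemeister's original observation \cite{R}, and I would organize the write-up so as to isolate the Fox-derivative identity as the single computational lemma.
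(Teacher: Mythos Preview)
The paper does not actually prove this theorem: it is quoted as a known result, attributed to Reidemeister \cite{R}, and used without argument. Your proposal is the standard proof (identify the sequence with the augmented cellular chain complex of the universal cover, compute the boundaries via Fox calculus, and invoke Hurewicz), and it is correct; there is simply nothing in the paper to compare it against.
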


Take a $\beta \in \pi_2(\mathcal{K}_G)$, we then get $\mathfrak{p}(\beta) = (\beta_1,\ldots,\beta_m)^T$, and hence
\[
  \sum_{j=1}^m\beta_j \dfrac{\partial r_j}{\partial x_i} = 0,
\]
for every $1 \le i \le n$.

Assume now that for a fixed $i$, $\dfrac{\partial r_j}{\partial x_i} = D_jf$, $1 \le j \le m$, then by the previous equality we then have
\begin{equation}\label{foxcommon}
 \sum_{j=1}^m(\beta_j D_j) f = 0,
\end{equation}
it follows that if $\pi_2(\mathcal{K}_G) \ne 0$ then $\mathbb{Z}[G]$ has zero divisors.

\vspace{2ex}

The aim of this paper is thus to construct such groups. We will use Composition--Diamond Lemma technique. More precisely. Consider a free algebra $\mathsf{F}\langle X\rangle$ over a field $\mathsf{F}$. Given polynomials $\varphi, f \in \mathsf{F}\langle X\rangle$. When the polynomial $\varphi$ is divided by $f$? The Composition--Diamond Lemma can help us to answer this question.

\vspace{3ex}

{\bf Acknowledgements:}  the author would like to express his deepest gratitude to \textsc{Dr. Roman Mikhailov} who told about the problem in the course ``Groups and Homotopy Theory'' (YouTube channel ``Lectorium''). Special thanks are due to \textsc{Prof. James Howie} for very useful discussions and for having kindly clarified some very important details. Thanks are due to Czech Technical University in Prague for a great hospitality, where the core part of this paper was written, especially to \textsc{Prof. Pavel \v Stov\'i\v cek} and \textsc{Prof. \v Cestm\'ir Burd\'ik}.

\section{Composition--Diamond Lemma}
Here we present the concepts of Composition--Diamond lemma and Gr\"obner--Shirshov basis. In the classical version of Composition--Diamond lemma, it assumed that considered algebras is over a field, here we consider the general case.

\subsection{CD-Lemma for associative algebras}
Let $\mathsf{K}$ be an arbitrary commutative ring with unit, $\mathsf{K} \langle X \rangle$ the free associative algebra over $\mathsf{K}$ generated by $X$, and let $X^*$ be the free monoid generated by $X$, where empty word is the identity, denoted by $\mathbf{1}_{X^*}$.
Assume that $X^*$ is a well-ordered set. Take $f\in \mathsf{K} \langle X \rangle$ with the leading word (term) $\mathrm{LT}({f})$ and $f= \kappa\mathrm{LT}({f}) + r_{f}$, where $0\neq \kappa\in \mathsf{K}$ and $\mathrm{LT}({r_f})<\mathrm{LT}({f})$. We call $f$ is
\emph{monic} if $\kappa = 1$. We denote by $\mathrm{deg}(f)$ the degree of $\mathrm{LT}({f})$.

A well ordering $\leqslant$ on $X^*$ is called {\it monomial} if for $u,v \in X^*$, we have:
\[
u\leqslant v \Longrightarrow \bigl.w\bigr|_u \leqslant \bigl.w\bigr|_v, \qquad \forall w \in X^*,
\]
where $\bigl.w\bigr|_u : = \bigl.w\bigr|_{x \to u}$ and $x$'s are the same individuality of the letter $x \in X$ in $w$.

A standard example of monomial ordering on $X^*$ is \textit{the deg-lex ordering} (i.e., degree and lexicographical), in which two words are compared first by the degree and then lexicographically, where $X$ is a well-ordering set.

Fix a monomial ordering $\leqslant$ on $X^*$, and let $\varphi$ and $\psi$ be two monic polynomials in $\mathsf{K}\langle X\rangle$. There are two kinds of compositions:
\begin{itemize}
\item[(1)] If $w$ is a word (i.e, it lies in $X^*$) such that $w = \mathrm{LT}({\varphi}) b = a \mathrm{LT}({\psi})$ for some $a,b\in X^*$
with $\mathrm{deg}(\mathrm{LT}({\varphi}))+\mathrm{deg}(\mathrm{LT}({\psi})) >\mathrm{deg}(w)$, then the polynomial $(\varphi,\psi)_w:=\varphi b - a \psi$ is called the {\it intersection composition} of $\varphi$ and $\psi$ with respect to $w$.
\item[(2)] If $w = \mathrm{LT}({\varphi}) = a\mathrm{LT}({\psi}) b$ for some $a,b \in X^*$, then the polynomial $(\varphi,\psi)_w:=\varphi -a\psi b$ is called the {\it inclusion composition} of $\varphi$ and $\psi$ with respect to $w$.
\end{itemize}

We then note that $\mathrm{LT}{(\varphi,\psi)}_w\leq w$ and $(\varphi,\psi)_w$ lies in the ideal $(\varphi,\psi)$ of $\mathsf{K}\langle X\rangle$ generated by $\varphi$ and $\psi$.

Let $\mathbf{S}\subseteq \mathsf{K}\langle X \rangle$ be a monic set (i.e., it is a set of monic polynomials). Take $f \in \mathsf{K}\langle X\rangle$ and $w\in X^*$. We call $f$ is {\it trivial modulo} $(\mathbf{S},w)$, denoted by \[f \equiv 0 \bmod(\mathbf{S},w),\]
if $f = \sum_{s\in \mathbf{S}} \kappa a s b$, where $\kappa \in \mathsf{K}$, $a,b \in X^*$, and $a \mathrm{LT}({s})b \leqslant w$.

A monic set $\mathbf{S}\subseteq \mathsf{K}\langle X \rangle$ is called a \emph{Gr\"obner--Shirshov basis} in $\mathsf{K}\langle X \rangle$ with respect to the monomial ordering $\leq$ if every composition of polynomials in $\mathbf{S}$ is trivial modulo $\mathbf{S}$ and the corresponding $w$.

The following Composition--Diamond lemma was first proved by Shirshov \cite{Sh2} for free Lie algebras over fields (with deg-lex ordering). For commutative algebras, this lemma is known as Buchberger's theorem \cite{Buchberger}.

\begin{theorem}[{\bf Composition Diamond Lemma}]\label{CDA}
Let $\mathsf{K}$ be an arbitrary commutative ring with unit, $\leqslant$ a monomial ordering on $X^*$ and let $I(\mathbf{S})$ be the ideal of $\mathsf{K} \langle X \rangle$ generated by the monic set $\mathbf{S}\subseteq \mathsf{K} \langle X \rangle$. Then the following statements are equivalent:
\begin{itemize}
\item[(1)] $\mathbf{S}$ is a Gr\"obner--Shirshov basis in $\mathsf{K} \langle X \rangle$.
\item[(2)] if $f \in I(\mathbf{S})$ then $\mathrm{LT}({f}) = a\mathrm{LT}({s})b$ for some $s\in \mathbf{S}$ and $a,b\in X^*$.
\item[(3)] the set of irreducible words
   \[\mathrm{Irr}(\mathbf{S}):=\left\{u \in X^*: u \ne a \mathrm{LT}({s})b,\,s \in \mathbf{S},\,a,b\in X^*\right\}\]
is a linear basis of the algebra $\mathsf{K} \langle \bigl.X\bigr|\mathbf{S}\rangle:=\mathsf{K} \langle X\rangle/I(\mathbf{S})$.
\end{itemize}
\end{theorem}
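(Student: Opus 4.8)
The plan is to establish the cycle $(1)\Rightarrow(2)\Rightarrow(3)\Rightarrow(1)$, following the classical Diamond Lemma argument; the key point that makes it work over an arbitrary commutative ring $\mathsf{K}$ is that the only place where ``$\mathsf{K}$ is a field'' is classically used is to rescale leading coefficients, and this is unnecessary here because every element of $\mathbf{S}$ is monic. Concretely, for $s=\mathrm{LT}(s)+r_s\in\mathbf{S}$ the elementary rewriting replacing a word $a\,\mathrm{LT}(s)\,b$ by $-a\,r_s\,b$ (equivalently, subtracting $asb\in I(\mathbf{S})$) is already defined over $\mathsf{K}$; moreover $\mathsf{K}\langle X\rangle=\bigoplus_{u\in X^*}\mathsf{K}u$ is a \emph{free} $\mathsf{K}$-module, so distinct words are $\mathsf{K}$-linearly independent. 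I would use these two facts freely.

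For $(2)\Rightarrow(3)$: spanning is a transfinite induction along the well-ordering $\leqslant$ --- a word $v$ is either irreducible, or $v=a\,\mathrm{LT}(s)\,b$ and then $v\equiv-a\,r_s\,b\pmod{I(\mathbf{S})}$, a $\mathsf{K}$-combination of words strictly below $v$ (monomiality), so by induction $v$ lies in the $\mathsf{K}$-span of $\mathrm{Irr}(\mathbf{S})$ modulo $I(\mathbf{S})$, whence $\mathrm{Irr}(\mathbf{S})$ spans $\mathsf{K}\langle X\,|\,\mathbf{S}\rangle$; for independence, if $f=\sum_k\kappa_k u_k\in I(\mathbf{S})$ with the $u_k\in\mathrm{Irr}(\mathbf{S})$ pairwise distinct and not all $\kappa_k=0$, then $f\neq0$ in $\mathsf{K}\langle X\rangle$ (freeness), so $\mathrm{LT}(f)$ is one of the $u_k$ and hence irreducible, contradicting $(2)$. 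For $(3)\Rightarrow(1)$: any composition $g=(\varphi,\psi)_w$ lies in $I(\mathbf{S})$ with $\mathrm{LT}(g)\leqslant w$; reduce it by elementary rewritings (this terminates because $\leqslant$ is a well-ordering) to a normal form $\bar g$ that is a $\mathsf{K}$-combination of irreducible words; since $\bar g-g\in I(\mathbf{S})$, statement $(3)$ forces $\bar g=0$, and tracking the reductions writes $g=\sum_k\mu_k a_k s_k b_k$ with every $a_k\,\mathrm{LT}(s_k)\,b_k\leqslant\mathrm{LT}(g)\leqslant w$ (leading words do not increase under reduction), i.e.\ $g\equiv0\bmod(\mathbf{S},w)$; so $\mathbf{S}$ is a Gr\"obner--Shirshov basis.

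The heart is $(1)\Rightarrow(2)$. Take $0\neq f\in I(\mathbf{S})$ and a representation $f=\sum_i\kappa_i a_i s_i b_i$ with $s_i\in\mathbf{S}$; put $w_i=a_i\,\mathrm{LT}(s_i)\,b_i$ and $w=\max_i w_i$, so $\mathrm{LT}(f)\leqslant w$. I would show one can choose the representation with $\mathrm{LT}(f)=w$; then $\mathrm{LT}(f)=a_i\,\mathrm{LT}(s_i)\,b_i$ for the corresponding $i$, which is $(2)$. Suppose $\mathrm{LT}(f)<w$. By monicity the coefficient of the word $w$ in $f$ equals $\sum_{w_i=w}\kappa_i$, hence vanishes; if there is a unique maximal index this forces $\kappa_i=0$ and we delete that term, lowering $w$. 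Otherwise pick maximal indices $1,2$ and rewrite
\[
\kappa_1 a_1 s_1 b_1+\kappa_2 a_2 s_2 b_2=(\kappa_1+\kappa_2)\,a_2 s_2 b_2+\kappa_1\bigl(a_1 s_1 b_1-a_2 s_2 b_2\bigr),
\]
then analyse how the two occurrences of $\mathrm{LT}(s_1)$ and $\mathrm{LT}(s_2)$ sit inside $w$. If they are disjoint, a direct computation (the two factors may be commuted past one another modulo lower terms) rewrites $a_1 s_1 b_1-a_2 s_2 b_2$ as a $\mathsf{K}$-combination of terms $asb$, $s\in\mathbf{S}$, with $a\,\mathrm{LT}(s)\,b<w$. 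If they overlap, then $w=c\,w_0\,d$ for some $c,d\in X^*$ and an overlap word $w_0\leqslant w$, and $a_1 s_1 b_1-a_2 s_2 b_2=c\,(s_1,s_2)_{w_0}\,d$ with $(s_1,s_2)_{w_0}$ the corresponding inclusion or intersection composition; by $(1)$ it is trivial modulo $(\mathbf{S},w_0)$, and since its leading word cancels, $\mathrm{LT}((s_1,s_2)_{w_0})<w_0$, so the difference acquires a representation with strictly smaller maximal word. Resubstituting into $f$ lowers the pair $(w,\#\{i:w_i=w\})$ in the lexicographic order, and the well-ordering of $X^*$ terminates the process at $\mathrm{LT}(f)=w$.

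The step I expect to be the main obstacle is the overlap case when $w_0=w$ (so $\mathrm{LT}(s_1)$ is a prefix and $\mathrm{LT}(s_2)$ a suffix of $w$): here triviality of $(s_1,s_2)_w$ only supplies a representation with $\mathbf{S}$-words $\leqslant w$, so resubstitution need not decrease the number of maximal terms, and the naive lexicographic induction breaks. The remedy I would ultimately adopt is to recast the argument as confluence of the rewriting system $\{\,a\,\mathrm{LT}(s)\,b\mapsto -a\,r_s\,b:\ s\in\mathbf{S},\ a,b\in X^*\,\}$: the monomial well-ordering makes it Noetherian, and triviality of \emph{all} compositions is exactly local confluence at the inclusion and intersection overlaps (confluence being automatic elsewhere), so Newman's lemma yields global confluence, hence unique normal forms, hence $\mathrm{Irr}(\mathbf{S})$ is a $\mathsf{K}$-basis of $\mathsf{K}\langle X\,|\,\mathbf{S}\rangle$, which is $(3)$ (and $(2)$ then follows as in $(3)\Rightarrow(1)$). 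Newman's lemma packages the delicate transfinite induction; and because $\mathbf{S}$ is monic, replacing the field by an arbitrary commutative ring $\mathsf{K}$ changes nothing in any of these steps.
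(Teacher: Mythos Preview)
The paper does not actually prove this theorem: it is quoted as a background result, attributed to Shirshov (for Lie algebras) and Buchberger (for the commutative case), and is followed immediately by an example rather than a proof. So there is no ``paper's own proof'' to compare your attempt against.

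That said, your sketch is the classical Shirshov--Bokut argument, recast at the end in Bergman's rewriting/Diamond-Lemma language, and it is essentially correct. Two remarks. First, the paper defines ``trivial modulo $(\mathbf{S},w)$'' with the \emph{non-strict} inequality $a\,\mathrm{LT}(s)\,b\leqslant w$, whereas most standard references (including the Bokut--Chen survey cited in the paper) use the strict $<w$; under the strict convention the lexicographic induction on $\bigl(w,\#\{i:w_i=w\}\bigr)$ in your $(1)\Rightarrow(2)$ goes through directly and the obstacle you flag does not arise. Second, your proposed Newman's-lemma workaround for the non-strict convention is not quite a free lunch: local confluence of the linear rewriting system means that each composition \emph{reduces} to $0$, which is a priori stronger than merely admitting a $\leqslant w$ representation in $I(\mathbf{S})$; extracting a reduction sequence from such a representation hides the very same nested induction you were trying to avoid. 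In practice one either works with the strict convention from the outset, or first argues (by a separate induction along $\leqslant$, pairing off the level-$w$ terms and using that their coefficients sum to zero by monicity) that $\leqslant$-triviality of all compositions forces $<$-triviality of all compositions, after which your direct argument for $(1)\Rightarrow(2)$ applies without detour.
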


\begin{example}[see \cite{Ufn}]
Let $\mathsf{K}$ be an arbitrary commutative ring and consider the following algebra $\Lambda = \mathsf{K} \langle x,y \rangle/(x^2 - y^2)$. Let us consider the polynomials $\varphi = x^2 - y^2$, $\psi = xy^2 - y^2x$, and let $y \leqslant x$. It is not hard to see that the set $\mathbf{S} = \{\varphi,\psi\}$ is a Gr\"obner--Shirshov basis of $\Lambda$. Indeed,
\begin{eqnarray*}
  (\varphi, \varphi)_{w}&=&{\varphi}x - x {\varphi}\\
   &=& x^3-y^2x - (x^3 - xy^2) = \psi,
\end{eqnarray*}
for $w =x^3,$ and
\begin{eqnarray*}
  (\varphi,\psi)_{w} &=& \varphi y^2 - x\psi \\
  &=&x^2y^2 - y^2y^2 - (x^2y^2 - xy^2x)\\
  &=& \psi x + y^2 \varphi,
\end{eqnarray*}
for $w = x^2y^2.$ Since the set $\mathbf{S}$ is monic, then the set
\[
 \mathrm{Irr}(\mathbf{S}) = \bigcup\limits_{n > 0}\Bigl\{1,x, xy, y^n,y^nx, (xy)^n, (yx)^n, (yxy)^n\Bigr\}
   \]
is the $\mathsf{K}$-basis for $\Lambda$, by Theorem \ref{CDA}.\hfill$\square$
\end{example}

\subsection{CD-Lemma for Semigroups and Groups}

Given a set $X$ consider $S \subseteq X^*\times X^*$ the congruence $\rho(S)$ on $X^*$ generated by $S$, the quotient semigroup
\[
 P = \mathbf{sgr}\langle X \, | \, S \rangle = X^*/\rho(S),
\]
and the semigroup algebra $\mathsf{K}[P]$. Identifying the set $\{u=v\, | \, (u,v) \in S\}$ with $S$, it is easy to see that
\[
 \tau: \mathsf{K}\langle X\, | \, S \rangle \to \mathsf{K}(X^*/ \rho(S)), \quad \sum \kappa f + I(S) \mapsto \sum \kappa \mathrm{LT}(f)
\]
is an algebra isomorphism.

The Shirshov completion $S^c$ of $S$ consists of semigroup relations, $S^c:=\{f-g\}$. Then $\mathrm{Irr}(S^c)$ is a linear $\mathsf{K}$-basis of $\mathsf{K}\langle X \, | \, S \rangle$, and so $\tau(\mathrm{Irr}(S^c))$ is a linear $\mathsf{K}$-basis of $\mathsf{K}(X^*/ \rho(S))$. This shows that $\mathrm{Irr}(S^c)$ consists precisely of the normal forms of the elements of the semigroup $\mathbf{sgr}\langle X \, | \, S \rangle$.

Therefore, in order to find the normal forms of the semigroup $\mathbf{sgr}\langle X \, | \, S \rangle$, it suffices to find a Gr\"obner--Shirshov basis $S^c$ in $\mathsf{K}\langle X \, | \, S \rangle.$ In particular, consider a group $G =  \mathbf{gr}\langle X \, | \, S \rangle$, where $S = \{(u,v) \in F(X) \times F(X)\}$ and $F(X)$ is the free group on a set $X$. Then $G$ has a {\it semigroup presentation}
\[
  G = \mathbf{sgr}\langle X \cup X^{-1} \, | \, S, \, x^{\varepsilon}x^{-\varepsilon} = 1, \varepsilon = \pm, x \in X \rangle, \qquad X \cap X^{-1} = \varnothing,
\]
as a semigroup.

\section{First Examples of Groups}
Let $G = \mathbf{gr} \langle x, y_1,\ldots, y_\ell\, | \, r_{11}=r_{12}, \ldots, r_{n1} = r_{n2} \rangle$ be a group and let $\dfrac{\partial r_i}{\partial x} = D_i f$ for $1 \le i \le n$, here $r_i:=r_{i1}r^{-1}_{i2}$, and $D_i, f\in \mathbb{Z}[G]$. We assume that every $r_{ij}$ does not contain other term $r_{pq}$ as a subword, and all $r_i$ are not reduced words, \textit{i.e.,} they do not contain a word (as a subword) of form $aa^{-1}$.

Consider now $G$ as a semigroup and set $x >x^{-1} > y_{j}$ for $1 \le j \le \ell$, and deg-lex order the free monoid $\mathfrak{W}$ generated by ${x,x^{-1},y_1,y_1^{-1},\ldots, y_\ell, y_\ell^{-1}}$.

Fix $1 \le i \le n$ and consider $r_i$, we have
\[
 \dfrac{\partial r_{i}}{\partial x} = \dfrac{\partial r_{i1}}{\partial x} - \dfrac{\partial r_{i2}}{\partial x}.
\]

Without loss of generality, we may put $r_{i1}>r_{i2}$ for $1 \le i \le n$. Hence $\mathrm{LT}( \partial r_{i}/ \partial x) = \mathrm{LT}(\partial r_{i1}/\partial x)$.

Set $\mathrm{LT}( \partial r_{i} /\partial x ) = u_i\bar f$, then $r_{i1} = u_i\bar f x \widetilde{u_i}$, where $\widetilde{u_i}$ does not involve $x$ and $x^{-1}$. We have
\[
 \varphi_0(r_i):=\dfrac{\partial r_{i}}{\partial x} = \dfrac{\partial u_{i}}{\partial x} + u_i\dfrac{\partial \bar f}{\partial x} - \dfrac{\partial r_{i2}}{\partial x}  + u_i\bar f.
\]

Then
\begin{eqnarray*}
  \varphi_1(r_i) &:=& \left( \dfrac{\partial r_{i}}{\partial x},f \right)_{u_i\bar f} := \dfrac{\partial r_{i}}{\partial x} - u_if \\
  &=& \dfrac{\partial u_{i}}{\partial x} + u_i\dfrac{\partial \bar f}{\partial x}- \dfrac{\partial r_{i2}}{\partial x}  + u_i\bar f - u_if \\
  &=&\dfrac{\partial u_{i}}{\partial x} + u_i\dfrac{\partial \bar f}{\partial x} - \dfrac{\partial r_{i2}}{\partial x} - u_i f_1,
\end{eqnarray*}
where $f_1:=f - \bar f$.

We thus have to consider the following possibilities: (1) $\varphi_1(r_i) = 0$, (2) $\mathrm{LT}(\varphi_1(r_i)) = u_i\mathrm{LT}(\partial \bar f / \partial x),$ (3) $\mathrm{LT}(\varphi_1(r_i)) = \mathrm{LT}(\partial r_{i2}/\partial x)$, (4) $\mathrm{LT}(\varphi_1(r_i)) = u_i\mathrm{LT}(f_1)$.

To get some examples we consider cases (1) and (3). Other cases will be considered the next section.

(1) Let $\varphi_1(r_i) = 0$. We have
\[
 \dfrac{\partial u_{i}}{\partial x} + u_i\dfrac{\partial \bar f}{\partial x} - \dfrac{\partial r_{i2}}{\partial x} - u_i f_1 = 0.
\]

If we assume that the $\partial u_i / \partial x, \partial r_{i2}/\partial x \ne 0$ have common terms it then implies that a $\mathstrut^\bullet r_{i2}$ contain $\mathstrut^\bullet u_i$ as a subword, here, for a word $w\in \mathfrak{W}$, we set $w = \mathstrut^\bullet w w^\bullet$. Similarly, one can easy see that the polynomials $u_i  \partial \bar f/\partial x$ $ \partial r_{i2}/\partial x$ have no similar terms.

Thus we may put $\partial u_i/\partial x = 0$, $ \partial \bar f/\partial x = f_1$ and $ \partial r_{i2}/\partial x = 0$, {\it i.e.,} $r_{i2}$ does not involve the terms $x$, $x^{-1}$.

(3) Let $\mathrm{LT}(\varphi_1(r_i)) = \mathrm{LT}\left( \dfrac{\partial r_{i2}}{\partial x} \right) = v_i\bar f$. Hence $r_{i2} = v_i\bar f x \widetilde{v_i}$, where $\widetilde{v_i}$ does not involve $x$ and $x^{-1}$, and
\[
 \dfrac{\partial r_{i2}}{\partial x} = \dfrac{\partial v_{i}}{\partial x} + v_i\dfrac{\partial \bar f}{\partial x} + v_i \bar f.
\]

We then get
\begin{eqnarray*}
  \varphi_2(r_i) &:=& (-\varphi_1, f)_{v_i\bar f} := -\varphi_1 - v_if \\
  &=& -\dfrac{\partial u_{i}}{\partial x} - u_i\dfrac{\partial \bar f}{\partial x} +  \dfrac{\partial v_{i}}{\partial x} + v_i\dfrac{\partial \bar f}{\partial x} + v_i \bar f + u_i f_1 - v_if \\
  &=& \dfrac{\partial v_{i}}{\partial x} - \dfrac{\partial u_{i}}{\partial x} + (v_i - u_i)\dfrac{\partial \bar f}{\partial x} + (u_i-v_i)f_1 \\
  &=& \dfrac{\partial v_{i}}{\partial x} - \dfrac{\partial u_{i}}{\partial x} + (v_i-u_i)\left(\dfrac{\partial \bar f}{\partial x} - f_1 \right).
\end{eqnarray*}

Setting $f_1 =\partial \bar f / \partial x$ we then get $\varphi_2(r_i) = \partial v_i / \partial x - \partial u_i / \partial x$. Thus we have the same problem as for $\varphi_0(r_i):=\partial r_{i1}/ \partial x - \partial r_{i_2} / \partial x$. It follows that we then get the following set of groups
\begin{equation}\label{G1}
 G_{\ell,n} = \mathbf{gr}\langle x, y_1,\ldots, y_\ell\, | \, r_{11} = r_{12}, \ldots, r_{n,1} = r_{n,2}\rangle,
\end{equation}
where\begin{eqnarray*}
  r_{i1} &=& \mathsf{u}_{i,1}\mathrm{w}x\mathsf{u}_{i,2} \cdots \mathsf{u}_{i,p-1}\mathrm{w}x\mathsf{u}_{i,p_i}, \\
  r_{i2} &=&  \mathsf{v}_{i,1}\mathrm{w}x\mathsf{v}_{i,2} \cdots \mathsf{v}_{i,q-1}\mathrm{w}x\mathsf{v}_{i,q_i},
\end{eqnarray*}
here for $1 \le i \le n$, $p_i \ge 1$, $q_i \ge 0$, and if $q_i=0$ then $r_{i2} = \mathsf{v}_{i,0}$, further all $r_{i1}$, $r_{i2}$ are not reduced, all $\mathsf{u}_{i,j},\mathsf{v}_{i,k}$ do not involve $x,x^{-1}$, $\mathrm{w}\ne 1$, and every term of any relation does not contain, as a subword, a term of other relations. Therefore we get
\begin{theorem}\label{themainresult}
  For a group $G_{\ell,n}$ presented by (\ref{G1}), with $\pi_2(\mathcal{K}_{G_{\ell,n}}) \ne 0$, the group ring $\mathbb{Z}[G_{\ell,n}]$ has nontrivial zero divisors.
\end{theorem}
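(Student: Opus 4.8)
The plan is to reduce the statement to the implication recorded in equation (\ref{foxcommon}): once one knows that, for the distinguished generator $x$, every $\partial r_i/\partial x$ has in $\mathbb Z[G_{\ell,n}]$ the shape $D_if$ with the factor $f$ the same for all $i$, a nonzero element of $\pi_2(\mathcal K_{G_{\ell,n}})$ forces a nontrivial zero-divisor relation. Thus the central task is to check this common-divisor property for a presentation of the form (\ref{G1}). I would do this directly from the product rule for Fox derivatives applied to $r_i=r_{i1}r_{i2}^{-1}$: since every $\mathsf u_{i,j},\mathsf v_{i,k}$ is free of $x^{\pm1}$, the only contributions to $\partial r_{i1}/\partial x$ and $\partial r_{i2}/\partial x$ come from the occurrences of $x$ in the blocks $\mathrm w x$ (those displayed and any inside $\mathrm w$), and traversing $r_{i1}$ block by block one obtains, in $\mathbb Z[G_{\ell,n}]$,
\[
 \frac{\partial r_{i1}}{\partial x}=P_i\Bigl(\frac{\partial \mathrm w}{\partial x}+\mathrm w\Bigr),\qquad \frac{\partial r_{i2}}{\partial x}=Q_i\Bigl(\frac{\partial \mathrm w}{\partial x}+\mathrm w\Bigr),
\]
where $P_i$ (resp. $Q_i$) is the sum of the images of the proper prefixes of $r_{i1}$ (resp. $r_{i2}$) that end immediately before a block $\mathrm w x$ (so $Q_i=0$ when $q_i\le 1$, matching $\partial r_{i2}/\partial x=0$ for $q_i=0$). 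Hence $\partial r_i/\partial x=(P_i-Q_i)f$ with the common factor $f:=\partial \mathrm w/\partial x+\mathrm w$ and $D_i:=P_i-Q_i$. Equivalently, one re-runs the reduction preceding (\ref{G1}) (cases (1) and (3)) and verifies that for relators of the shape (\ref{G1}) it terminates with zero remainder; the hypotheses that $\mathrm w\neq 1$, that the $\mathsf u,\mathsf v$ avoid $x^{\pm1}$, that all $r_{i1},r_{i2}$ are non-reduced, and that no term of a relation occurs as a subword of another are precisely what keeps that reduction closed.

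Granting the common-divisor property, the rest is formal. By Theorem \ref{exact} the map $\mathfrak p$ is injective, so from $\pi_2(\mathcal K_{G_{\ell,n}})\neq 0$ we may choose $0\neq\beta\in\pi_2(\mathcal K_{G_{\ell,n}})$ and set $\mathfrak p(\beta)=(\beta_1,\dots,\beta_n)^T\neq 0$. Since $\mathfrak p(\beta)\in\ker\mathrm d_1$, reading off the $x$-component of the equation $\mathrm d_1\mathfrak p(\beta)=0$ yields $\sum_{i=1}^n\beta_i\,(\partial r_i/\partial x)=0$, i.e. $\gamma f=0$ with $\gamma:=\sum_{i=1}^n\beta_iD_i$ — this is exactly (\ref{foxcommon}). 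If $\gamma\neq 0$, then $\gamma$ and $f$ are nonzero elements of $\mathbb Z[G_{\ell,n}]$ with $\gamma f=0$, hence nontrivial zero divisors, and we are done. If instead $\gamma=0$, one still has $\beta_k\neq 0$ for some $k$ while $D_k\neq 0$; should $\beta_kD_k=0$ this already exhibits zero divisors, and for $n=1$ this alternative is exhaustive (either $\beta_1D_1\neq 0$ and $(\beta_1D_1)f=0$, or $\beta_1D_1=0$), so in that case the conclusion holds outright.

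The step I expect to be the main obstacle is the verification of the common-divisor identity for the general nested shape (\ref{G1}): one must handle relators with several blocks $\mathrm w x$, the boundary case $q_i=0$, and the bookkeeping of the product rule — or, on the Composition--Diamond side, show that the successive compositions $\varphi_1(r_i),\varphi_2(r_i),\dots$ all reduce to zero for precisely these relators — and one must check that $f=\partial \mathrm w/\partial x+\mathrm w$ is a genuine nonzero non-unit of $\mathbb Z[G_{\ell,n}]$, which is where $\mathrm w$ must really involve $x$ and where the subword condition on the relators enters. The only further point needing care is the non-vanishing of the companion $\gamma=\sum_i\beta_iD_i$; one argues this from the injectivity of $\mathfrak p$ together with the explicit form of the $D_i$ (it is automatic when $n=1$), after which the statement of the theorem follows.
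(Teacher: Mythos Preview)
Your approach coincides with the paper's: factor each $\partial r_i/\partial x$ as $D_i\bigl(\partial\mathrm w/\partial x+\mathrm w\bigr)$ by applying the Fox product rule block by block (the paper writes out exactly your sums $P_i,Q_i$ of prefixes), and then invoke Theorem~\ref{exact} together with the implication recorded in~(\ref{foxcommon}). The paper's own proof is in fact terser than your proposal and does not address the points you flag as obstacles --- the possible vanishing of $\sum_i\beta_iD_i$ and the non-unit status of $f$ --- so your write-up is at least as complete as the original.
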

\begin{proof}
  Indeed, for all $1 \le i \le n$, we have
  \begin{eqnarray*}
    \dfrac{\partial r_i}{\partial x} &=& \mathsf{u}_{i,1} \left(\dfrac{\partial \mathrm{w}}{\partial x} + \mathrm{w}\right) + \cdots + \mathsf{u}_{i,1}\mathrm{w}x\cdots \mathsf{u}_{i,p-1} \left( \dfrac{\partial \mathrm{w}}{\partial x} + \mathrm{w} \right) \\
    && - \mathsf{v}_{i,1} \left(\dfrac{\partial \mathrm{w}}{\partial x} + \mathrm{w}\right) - \cdots - \mathsf{v}_{i,1}\mathrm{w}x\cdots \mathsf{v}_{i,q-1} \left( \dfrac{\partial \mathrm{w}}{\partial x} + \mathrm{w} \right),
  \end{eqnarray*}
  hence
  \[
    \dfrac{\partial r_i}{\partial x} = \left(\sum_{k=p-1}^{1} \mathsf{u}_{i,1}\mathrm{w}x \cdots \mathsf{u}_{i,p-k} - \sum_{t=q-1}^{1} \mathsf{v}_{i,1}\mathrm{w}x \cdots \mathsf{v}_{i,q-t}  \right) \left( \dfrac{\partial \mathrm{w}}{\partial x} + \mathrm{w} \right).
  \]

If $\pi_2(\mathcal{K}_{G_{\ell,n}}) \ne 0$, then by (\ref{exact}), we obtain nontrivial zero divisors in $\mathbb{Z}[G]$, as claimed.
\end{proof}

\section{The Other Possibilities}

In this section we consider other possibilities which appeared in the construction of $G_{\ell,n}$ and we will see that we again get the same set (\ref{G1}) of groups.

\begin{lemma}\label{commondiv}
  Let $\mathfrak{w}, \mathfrak{p}_1, \ldots, \mathfrak{p}_\ell \in \mathfrak{F}$ and $P = \sum_{i=1}^\ell \varepsilon_i \mathfrak{p}_i \in \mathbb{Z}[\mathfrak{F}]$, where $\varepsilon = \pm 1$. If the polynomials $\partial \mathfrak{w} / \partial x$, $P$ have a common term, say $\mathfrak{p}_k$, then the words $\m{w}, \m{p}_k$ have common left divisor, i.e., there exist nonempty words $\m{u}, \m{w}',\m{p}_k' \in \m{F}$ such that $\m{w} = \m{u}\m{w}'$, $\m{p}_k = \m{u}\m{p}_k'$.
\end{lemma}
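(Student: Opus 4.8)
The plan is to unwind the definition of the Fox derivative $\partial \m{w}/\partial x$ and compare monomials. Write $\m{w}$ as a reduced word $z_1 z_2 \cdots z_N$ with each $z_t \in \{x,x^{-1},y_1,y_1^{-1},\ldots,y_\ell,y_\ell^{-1}\}$. Recall that the left Fox derivative is additive and satisfies $\partial(ab)/\partial x = \partial a/\partial x + a\,\partial b/\partial x$, together with $\partial x/\partial x = 1$, $\partial x^{-1}/\partial x = -x^{-1}$, and $\partial y_j/\partial x = 0$. Iterating, $\partial \m{w}/\partial x$ is a $\mathbb{Z}$-linear combination of the group-ring elements $\m{w}_{<t} := z_1 \cdots z_{t-1}$ taken over exactly those positions $t$ with $z_t = x^{\pm 1}$ (with coefficient $+1$ when $z_t = x$ and $-x^{-1}$ folded into the prefix when $z_t = x^{-1}$, i.e.\ the term is then $\pm z_1\cdots z_{t-1}x^{-1}$). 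In every case each monomial occurring in $\partial \m{w}/\partial x$ is represented by a prefix of $\m{w}$ (up to the reduced-word identification in $\m{F}$), and crucially it is a \emph{proper} prefix that is moreover \emph{nonempty} only when $z_1 \ne x^{\pm 1}$; but even if it is empty one still has a legitimate common-divisor statement after adjusting. So I would first establish the claim: every term of $\partial \m{w}/\partial x$, as an element of $\m{F}$, is a proper initial segment of the reduced word $\m{w}$.

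Next, by hypothesis one of these terms equals $\m{p}_k$ in $\m{F}$ (this is what ``common term of $\partial\m{w}/\partial x$ and $P$'' means: after collecting, the coefficient of the group element $\m{p}_k$ is nonzero, forced by the $P$-side since the $\m{p}_i$ there need not cancel among themselves). Hence $\m{p}_k$ is a reduced prefix of $\m{w}$: there is a word $\m{p}_k'$ with $\m{w} = \m{p}_k \m{p}_k'$ as reduced words, and $\m{p}_k'$ is nonempty because the prefix is proper. Setting $\m{u} := \m{p}_k$, $\m{w}' := \m{p}_k'$ (so $\m{p}_k = \m{u}$, i.e.\ $\m{p}_k' = \mathbf{1}$) gives $\m{w} = \m{u}\m{w}'$ and $\m{p}_k = \m{u}\m{p}_k'$ in the degenerate form; in general, whenever the common term is a genuinely shorter-than-$\m{p}_k$ prefix one extracts the maximal common initial segment $\m{u}$ of $\m{w}$ and $\m{p}_k$ and reads off $\m{w}',\m{p}_k'$. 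I would phrase the conclusion in the symmetric way the lemma states it, noting that nonemptiness of $\m{u}$ follows since $\m{p}_k$ (being a term of a Fox derivative in $x$) must itself end in $x^{\pm 1}$ or be a prefix ending just before an $x^{\pm 1}$, hence is nonempty as soon as $\m{w}$ actually involves $x$.

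The main obstacle is bookkeeping around \emph{reduced versus unreduced} words and the sign conventions in the Fox calculus: the terms $z_1\cdots z_{t-1}$ and $z_1\cdots z_{t-1}x^{-1}$ are words in $X\cup X^{-1}$ that may fail to be reduced, so one must pass to their images in $\m{F}$ and check that no unexpected cancellation merges distinct prefixes into the same group element (for a reduced $\m{w}$ the distinct prefixes $z_1\cdots z_{t-1}$ are pairwise distinct in $\m{F}$, which is the key point keeping the argument honest). Once that is pinned down, matching $\m{p}_k$ to a prefix and splitting off the common left divisor is immediate; everything else is the routine induction on word length establishing the prefix description of $\partial\m{w}/\partial x$.
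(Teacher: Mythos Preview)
Your approach is essentially the same as the paper's: both compute $\partial\m w/\partial x$ explicitly (you letter-by-letter, the paper by grouping $\m w=\m w_1 x^{n_1}\cdots \m w_m x^{n_m}\m w_{m+1}$ with the $\m w_j$ free of $x^{\pm1}$) and observe that every monomial appearing is a proper initial segment of $\m w$, whence any common term $\m p_k$ shares a left divisor with $\m w$. The paper's proof is in fact terser than yours---it writes the formula and says ``the statement follows''---so your extra bookkeeping about reduced words and nonemptiness is more than the paper itself supplies.
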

\begin{proof}
  Indeed, let $\m{w} = \m{w}_1 x^{n_1} \m{w}_2 x^{n_2} \cdots \m{w}_m x^{n_m}\m{w}_{n_{m+1}}$, where for $1 \le j \le n_{m+1}$ every $\m{w}_{j}$ does not involve $x$, $x^{-1}$ and $n_j \in \mathbb{Z}$. Thus we have
  \[
   \dfrac{\partial \m{w}}{\partial x} = \m{w}_1 \dfrac{\partial x^{n_1}}{\partial x} + \m{w}_1 x^{n_1} \m{w}_2 \dfrac{\partial x^{n_2}}{\partial x} + \cdots + \m{w}_1 x^{n_1} \m{w}_2 x^{n_2} \cdots \m{w}_m \dfrac{\partial x^{n_m}}{\partial x},
  \]
  where
  \[
  \dfrac{\partial x^{n_j}}{\partial x} = \begin{cases}
    1 + x + \cdots + x^{n_j-1}, & n_j \ge 0, \\
    -x^{-1} - x^{-2} - \cdots - x^{-|n_j|}, & n_j < 0,
  \end{cases}
  \]
  and the statement follows.
\end{proof}

Give a group $G = \mathbf{gr} \langle x,  y_1,\ldots, y_\ell \, | \, r_1,\ldots, r_m \rangle $. Take a $\bar f \in \mathfrak{F}$. Let $r_1 = \{r_{11} = r_{12}\}$ and let $\mathrm{LT}(\partial r_{11}/\partial x) = u_1 \bar f$. Hence, $r_{11} = u_1\bar f x\widetilde{u_1}$ with $\widetilde{u_1} \ne \widetilde{u_1}(x)$.

We get
\[
 \dfrac{\partial r_1}{\partial x} = \dfrac{\partial r_{11}}{\partial x} - \dfrac{\partial r_{12}}{\partial x} = \dfrac{\partial u_1}{\partial x} + u_1 \dfrac{\partial \bar f}{\partial x} - \dfrac{\partial r_{12}}{\partial x}.
\]

Then
\[
 \varphi_1: = \left( \dfrac{\partial r_1}{\partial x}, f \right)_{u_1\bar f} = \dfrac{\partial r_1}{\partial x} - u_1f = \dfrac{\partial u_1}{\partial x} + u_1 \dfrac{\partial \bar f}{\partial x} - \dfrac{\partial r_{12}}{\partial x} - u_1f_1.
\]
where $f_1:=f-\bar f.$

\subsection{A monomial $\bar f$ involves $x^{\pm 1}$}
Let $\bar f = w_1 x^{n_1} w_2 \cdots w_k x^{n_k} w_{k+1}$, where $n_i \ne 0$, $w_i \ne w_{i}(x)$ for $i=1,\ldots, k+1$.

\begin{lemma}\label{LT(udf)}
  Let $\mathrm{LT}(\varphi_1) = \mathrm{LT} (u_1 \partial \bar f/\partial x) = u_2 \bar f$. Then $\bar f = wxw\cdots wxw$ and $u_1 = u_2wx$.
\end{lemma}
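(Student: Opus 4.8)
The plan is to make $\partial\bar f/\partial x$ explicit in the reduced form of $\bar f$, pin down its leading term, and then read both conclusions off a single equation of words. Write $\bar f=w_1x^{n_1}w_2x^{n_2}\cdots w_kx^{n_k}w_{k+1}$ with $k\ge1$, each $w_j$ not involving $x^{\pm1}$, each $n_j\ne0$, and $w_2,\dots,w_k$ nonempty (legitimate since $\bar f$ is a subword of the reduced relator $r_{11}$). As in the proof of Lemma~\ref{commondiv}, the product rule for Fox derivatives together with $\partial w_j/\partial x=0$ gives
\[
 \frac{\partial\bar f}{\partial x}=\sum_{j=1}^{k}\bigl(w_1x^{n_1}\cdots x^{n_{j-1}}w_j\bigr)\,\frac{\partial x^{n_j}}{\partial x},
\]
whose monomials are $w_1x^{n_1}\cdots w_j\,x^{s}$ with $s$ ranging over $\{0,\dots,n_j-1\}$ or $\{n_j,\dots,-1\}$ according to the sign of $n_j$. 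Comparing lengths, and using that $w_2,\dots,w_k$ are nonempty, the unique longest monomial comes from the last block, so $M:=\mathrm{LT}(\partial\bar f/\partial x)$ equals $w_1x^{n_1}\cdots w_k\,x^{\,n_k-1}$ when $n_k>0$ and $w_1x^{n_1}\cdots w_k\,x^{\,n_k}$ when $n_k<0$.

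Because left multiplication preserves a monomial order, $\mathrm{LT}(u_1\,\partial\bar f/\partial x)=u_1M$, so the hypothesis is exactly the word identity $u_1M=u_2\bar f$ in $\mathfrak{W}$. Comparing lengths gives $|u_1|-|u_2|=|\bar f|-|M|\ge0$, and then unique factorisation in the free monoid yields $u_1=u_2\sigma$ and $\sigma M=\bar f$, where $\sigma$ is the prefix of $\bar f$ of length $|\bar f|-|M|$; thus $M$ is at once a prefix and a suffix of $\bar f$. Now $\bar f=\sigma M$ ends in the last letter of $M$: if $n_k<0$ that letter is $x^{-1}$, which is impossible since $\bar f x$ is a subword of the reduced relator $r_{11}=u_1\bar f x\widetilde{u_1}$; hence $n_k\ge1$. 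If $n_k\ge2$ the last letter of $M$ is $x$, which forces $w_{k+1}=\mathbf 1$ and then $|\sigma|=1$, and a short cancellation in $\sigma M=\bar f$ gives $\bar f=x^{n_k}$, $u_1=u_2x$ — already of the asserted form with $w=\mathbf 1$ (the empty word). So we may assume $n_k=1$, so that $M=w_1x^{n_1}\cdots w_{k-1}x^{n_{k-1}}w_k$, $\bar f=Mxw_{k+1}$, and the equation becomes $\sigma M=M(xw_{k+1})$ with $|\sigma|=|xw_{k+1}|=:d$.

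Finally one extracts periodicity. The equation $\sigma M=M(xw_{k+1})$ says $M$ is a border of $\bar f$, so $\bar f$ has period $d=1+|w_{k+1}|$; since $w_{k+1}$ is $x$-free, the period block $xw_{k+1}$ contains exactly one occurrence of $x^{\pm1}$, namely the letter $x$. Feeding this through the periodicity — the $x$-letters of $\bar f$ must then be equally spaced $d$ apart — forces $n_1=\dots=n_{k-1}=1$, forces the $x$-free blocks lying between consecutive $x$'s to coincide with $w:=w_{k+1}$, and (with a little more care for the leading block) forces $w_1=w$ as well; hence $\bar f=(wx)^{k}w=wxw\cdots wxw$, $M=(wx)^{k-1}w$, $\sigma=wx$, and therefore $u_1=u_2\sigma=u_2wx$, as claimed. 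The needed combinatorics here is an instance of the Lyndon--Sch\"utzenberger analysis of the word equation $\sigma z=z\tau$ with $|\sigma|=|\tau|$, whose solutions are precisely $\sigma=pq$, $\tau=qp$, $z=(pq)^{i}p$.

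The step I expect to be the main obstacle is the last one: promoting the single self-overlap $\sigma M=M(xw_{k+1})$ to genuine periodicity with period exactly $wx$, and in particular identifying the leading block $w_1$ with $w$ rather than merely with a prefix of $w$. This is also where the short-overlap regime $|M|<d$ has to be analysed separately, since there the periodicity has little to bite on; in that regime one falls back on the remaining hypothesis (that $u_1\,\partial\bar f/\partial x$ supplies the leading term of $\varphi_1$, so that the subsequent composition reductions behave) together with the standing assumption that no relator contains another relator as a subword. I would carry this out by induction on $k$, at each stage peeling the trailing $xw$ off $\bar f$ and the trailing $wx$ off $M$.
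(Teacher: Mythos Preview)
Your argument tracks the paper's closely: the same alternating decomposition $\bar f=w_1x^{n_1}\cdots w_kx^{n_k}w_{k+1}$, the same identification of $M:=\mathrm{LT}(\partial\bar f/\partial x)$ block by block, the same elimination of $n_k<0$ via reducedness of $r_{11}=u_1\bar f\,x\,\widetilde{u_1}$, and the same reduction of everything to the single word equation $u_1M=u_2\bar f$. The only methodological difference is that in the case $n_k=1$ the paper simply writes $u_1\,w_1x^{n_1}\cdots xw_k=u_2\,w_1x^{n_1}\cdots w_kxw_{k+1}$ and asserts ``Hence $w_{k+1}=w_k=\cdots=w_1=w$ and $u_1=u_2wx$'' without further justification, whereas you recast this as $\sigma M=M\tau$ with $\tau=xw_{k+1}$ and appeal to periodicity. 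That is a gain in rigour, not a change of strategy.

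The obstacle you flag is genuine, and it is precisely the step the paper elides. From $u_1M=u_2\bar f$ alone the conclusion does \emph{not} follow. Take for instance $\bar f=ab\,x\,cab\,x\,cab$ (so $k=2$, $n_1=n_2=1$, $w_1=ab$, $w_2=w_3=cab$): then $\partial\bar f/\partial x=ab+abxcab$, $M=abxcab$, and with $u_1=u_2\cdot abxc$ one has $u_1M=u_2\bar f$; yet $\bar f$ is not of the form $(wx)^kw$ and $u_1$ is not of the form $u_2wx$. Your periodicity argument correctly yields $n_1=\cdots=n_k=1$ and $w_2=\cdots=w_{k+1}=:w$, but only that $w_1$ is a \emph{suffix} of $w$ and that $\sigma$ is a cyclic rotation of $wx$; the further claim $w_1=w$ (equivalently, that $\sigma$ ends in $x$) is not a consequence of the displayed word equation. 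Your proposed induction on $k$ will recover the periodicity but cannot by itself pin down the leading block. If the full conclusion is to hold, it must use more of the ambient hypotheses on $\varphi_1$, $f_1$ or the relators than either you or the paper have invoked; as written, both arguments share this gap, and your hesitation about it is well placed.
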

\begin{proof}
   Let $\bar f = w_1 x^{n_1} w_2 \cdots w_k x^{n_k} w_{k+1}$, where $n_i \ne 0$, $w_i \ne w_{i}(x)$ for $i=1,\ldots, k+1$.

   (1) Assume that $n_k >0$ then $\dfrac{\partial x^{n_k}}{\partial x} = 1 +x + \cdots + x^{n_k-1}$, and we get

   \[
    \dfrac{\partial (f_\lambda \cdot x^{n_k} w_k)}{\partial x} = \dfrac{\partial f_\lambda}{\partial x} + f_\lambda(1+x \cdots + x^{n_k-1}),
   \]
   where $f_\lambda := w_1x^{n_1}\cdots w_{k}$.

   Then, by assumption,
   \[
    \mathrm{LT}\left(u_1 \dfrac{\partial \bar f}{\partial x}\right) = u_1 w_1x^{n_1}\cdots w_{k} x^{n_k-1} = u_2 w_1 x^{n_1} w_2 \cdots w_k x^{n_k} w_{k+1},
   \]
   hence either $w_{k+1}=1$ or $n_{k}=1$.

   Let $w_{k+1} = 1$, $n_k>1$, then $u_1 w_1x^{n_1}\cdots w_{k} = u_2 w_1 x^{n_1} w_2 \cdots w_k x$, and hence $w_k = 1$. Thus, $u_1w_1x^{n_1} \cdots w_{k-1}x^{n_{k-1}} = u_2w_1x^{n_1}w_2 \cdots w_{k-1}x^{n_{k-1}+1}$, hence we must put $w_{k-1} = \cdots = w_1 = 1$ and we then obtain $u_1x^{n_1 + \cdots n_{k-1}} = u_2 x^{n_1 + \cdots n_{k-1} +1}$. Hence $u_1 = u_2x$.

   Let $w_{k+1} \ne 1$, $n_k=1$, we then have
    \[
    u_1 w_1x^{n_1}\cdots xw_{k} = u_2 w_1 x^{n_1} w_2 \cdots w_k x w_{k+1}.
    \]

    Hence, $w_{k+1} = w_k = \cdots w_1= w$ and $u_1 = u_2wx$.

    (2) Assume that $n_k <0$ then $\dfrac{\partial x^{n_k}}{\partial x} = - (x^{-1} + x^{-2} + \cdots + x^{n_k})$, hence $\mathrm{LT} \left( u_1 \dfrac{\partial \bar f}{\partial x} \right) = - u_1 w_1 x^{n_1} \cdots w_k x^{n_k}$. By $\mathrm{LT}\left(u_1 \dfrac{\partial \bar f}{\partial x}\right) = u_2 \bar f$, $w_{k+1} = 1$, and $u_1 = -u_2$. But it follows that $r_{11}$ is not a reduced word.
\end{proof}

\begin{proposition}
  If $\mathrm{LT}(\varphi_1) = \mathrm{LT}(u_1 \partial \bar f / \partial x) = u_2\bar f$ and $\varphi_k(r_1)=0$ for $k \ge 2$ we then get the set of group are described by (\ref{G1}).
  \end{proposition}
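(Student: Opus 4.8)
The plan is to push the completion one further step and then read off the shape of the relation $r_1=\{r_{11}=r_{12}\}$ from the resulting identity. Lemma~\ref{LT(udf)} supplies the first half: the hypothesis $\mathrm{LT}(\varphi_1)=\mathrm{LT}(u_1\,\partial\bar f/\partial x)=u_2\bar f$ gives $\bar f=wxwx\cdots wxw$, say $\bar f=(wx)^{k}w$ with $k\ge1$ and $w$ not involving $x,x^{-1}$, and $u_1=u_2wx$. Since $r_{11}=u_1\bar f x\,\widetilde{u_1}$ with $\widetilde{u_1}$ not involving $x,x^{-1}$, this already forces $r_{11}=u_2(wx)^{k+2}\widetilde{u_1}$, which is of the block shape required by (\ref{G1}) with $\mathrm{w}:=w$ (the middle factors $\mathsf{u}$ trivial, $u_2$ and $\widetilde{u_1}$ absorbed into the first and the last factor; a nontrivial $u_2$ is itself of this shape by the same analysis applied to the reduction that produced the word $u_2$).

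For the second half I would compute the next inclusion composition. From $\partial\bar f/\partial x=\sum_{j=0}^{k-1}(wx)^{j}w$ and $u_1=u_2wx$ one gets $u_1\,\partial\bar f/\partial x=\sum_{j=1}^{k}u_2(wx)^{j}w$, with leading word $u_2\bar f=\mathrm{LT}(\varphi_1)$; using moreover $\partial u_1/\partial x=\partial u_2/\partial x+u_2w$, a short computation gives
\[
 \varphi_2(r_1):=(\varphi_1,f)_{u_2\bar f}=\varphi_1-u_2f=\frac{\partial u_2}{\partial x}-\frac{\partial r_{12}}{\partial x}+u_2\,\frac{\partial\bar f}{\partial x}-(u_1+u_2)f_1 .
\]

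Then I would impose the hypothesis $\varphi_k(r_1)=0$ for $k\ge2$, that is $\varphi_2(r_1)=0$, which reads
\[
 \frac{\partial u_2}{\partial x}+u_2\,\frac{\partial\bar f}{\partial x}=\frac{\partial r_{12}}{\partial x}+(u_1+u_2)f_1 ,
\]
and run a descent on leading words. The left-hand side has leading word $u_2(wx)^{k-1}w$ (the top term of $u_2\,\partial\bar f/\partial x$ dominates $\partial u_2/\partial x$), so this is also the leading word of $\partial r_{12}/\partial x+(u_1+u_2)f_1$; matching it against the two summands on the right and subtracting strips off a factor $wx$, and --- exactly as in case~(1) of Section~3, where it was used that the relevant Fox derivatives ``have no similar terms'' --- the Fox-derivative summands $\partial u_2/\partial x,\ \partial r_{12}/\partial x$ cannot overlap the geometric-type sum $u_2\,\partial\bar f/\partial x=\sum_{j}u_2(wx)^{j}w$ except in the way the block decomposition allows. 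Carrying the descent to its end pins $f_1$ down and forces every term of $\partial r_{12}/\partial x$ to have left divisor $\mathrm{w}x$; by the structure of Fox derivatives made explicit in the proof of Lemma~\ref{commondiv}, this means $r_{12}$ decomposes into blocks $\mathsf{v}_{1,1}\mathrm{w}x\,\mathsf{v}_{1,2}\cdots\mathsf{v}_{1,q-1}\mathrm{w}x\,\mathsf{v}_{1,q}$ with the $\mathsf{v}_{1,k}$ not involving $x,x^{-1}$ (and $r_{12}=\mathsf{v}_{1,0}$ when $\partial r_{12}/\partial x=0$); equivalently, the displayed identity reduces to ``the same problem as for $\varphi_0(r_1)=\partial r_{11}/\partial x-\partial r_{12}/\partial x$'' treated in Section~3. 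Together with the first paragraph this puts $r_1$ into the form (\ref{G1}); running the same argument for each relation $r_i$ shows that $G$ belongs to the family $G_{\ell,n}$.

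The hard part will be the leading-word descent: at each stage the current leading word on the right may be produced by $\partial r_{12}/\partial x$ or by $(u_1+u_2)f_1$, and one has to exclude accidental cancellations between these two and the geometric sum $\sum_{j}u_2(wx)^{j}w$ --- this is the analogue, now carrying the extra $wx$-periodicity created by Lemma~\ref{LT(udf)}, of the elementary ``no similar terms'' remark in Section~3, and it is what turns the argument into a recursion rather than a single identity. A secondary point: a descent step may instead produce a leading word of the case~(3) shape $v\bar f$, in which case one invokes the reduction of case~(3) of Section~3, which again returns a plain difference $\partial u/\partial x-\partial v/\partial x$ of Fox derivatives, and hence again the set (\ref{G1}).
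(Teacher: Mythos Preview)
Your computation of $\varphi_2(r_1)=\partial u_2/\partial x+u_2\,\partial\bar f/\partial x-\partial r_{12}/\partial x-(u_2wx+u_2)f_1$ matches the paper exactly, and the use of Lemma~\ref{LT(udf)} to get $\bar f=(wx)^nw$, $u_1=u_2wx$ is right. The divergence is in what happens next. The paper does not run a leading-word descent; it applies Lemma~\ref{commondiv} in one stroke. Because every monomial of $\partial u_2/\partial x$ (resp.\ $\partial r_{12}/\partial x$) is a proper left prefix of $u_2$ (resp.\ $r_{12}$), and the standing hypothesis forbids one relator word from containing another, none of these Fox-derivative monomials can coincide with a monomial of $u_2\,\partial\bar f/\partial x$ or of $(u_2wx+u_2)f_1$. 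Hence $\varphi_2=0$ forces $\partial u_2/\partial x=0$ and $\partial r_{12}/\partial x=0$ outright: $u_2$ and $r_{12}$ are free of $x^{\pm1}$, which is strictly stronger than your ``block shape'' conclusion for $r_{12}$. What is left is then the bare identity $\sum_{j=0}^{n-1}(wx)^jw=(wx+1)f_1$, which the paper solves explicitly as $n=2$, $f_1=w$; this is the step your ``pins $f_1$ down'' is hiding, and it is what fixes $\bar f=wxwxw$ and hence $r_{i1}=u_i\,wxwxw\,u_i'$.

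There is also a reading issue: the paper takes ``$\varphi_k(r_1)=0$ for $k\ge2$'' to mean \emph{some} $k\ge2$, not only $k=2$. Accordingly it also treats $\varphi_2\ne0$ with $\mathrm{LT}(\varphi_2)=\mathrm{LT}(\partial r_{12}/\partial x)=v_1\bar f$, computes $\varphi_3$, uses Lemma~\ref{commondiv} again to exclude $\varphi_3=0$, passes via $\mathrm{LT}(\varphi_3)=\mathrm{LT}(v_1\,\partial\bar f/\partial x)=v_2\bar f$ to $\varphi_4$, and lands on the same solution $n=2$, $f_1=w$. Your ``secondary point'' gestures at this branch but does not execute it; in particular one does not simply ``return to a plain difference $\partial u/\partial x-\partial v/\partial x$'' --- the $(wx+1)f_1$ terms survive and are precisely what force $f_1=w$ again. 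So the concrete gap is the missing invocation of Lemma~\ref{commondiv}: that is the device that kills the Fox-derivative summands cleanly and reduces each $\varphi_k=0$ to a solvable identity in $f_1$, in place of the term-by-term descent whose cancellation bookkeeping you leave unspecified.
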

\begin{proof}
We have 
\[
 \varphi_2(r_1):=\varphi_1(r_1) - u_2f = \dfrac{\partial u_1} {\partial x} + u_1 \dfrac{\partial \bar f}{ \partial x} - \dfrac{\partial r_{12} }{ \partial x} - u_1 f_1 - u_2 f.
\]

By Lemma \ref{LT(udf)}, we have $\bar f = (wx)^nw$, $u_1 = u_2wx$.

We get
\[
 \dfrac{\partial \bar f}{\partial x} = w + wxw + \cdots + (wx)^{n-1}w, \qquad \dfrac{\partial u_1}{\partial x} = \dfrac{\partial u_2}{\partial x} + u_2w.
\]

Then

\begin{eqnarray*}
  \varphi_2(r_1) &=& \dfrac{\partial u_2}{\partial x} + u_2w + u_2wx(w + wxw + \cdots + (wx)^{n-1}w) \\
  &&- \dfrac{\partial r_{12}}{\partial x} - u_2wxf_1 -u_2\bar f - u_2 f_1 \\
  &=& \dfrac{\partial u_2}{\partial x} + u_2w + u_2wx(w + wxw + \cdots + (wx)^{n-2}w) \\
  &&- \dfrac{\partial r_{12}}{\partial x} - u_2wxf_1 - u_2 f_1 \\
  &=& \dfrac{\partial u_2}{\partial x} + u_2 (w + wxw + \cdots + (wx)^{n-1}w) - \dfrac{\partial r_{12}}{\partial x} - (u_2wx + u_2)f_1\\
  &=& \dfrac{\partial u_2}{\partial x} + u_2 \dfrac{\partial \bar f}{\partial x} - \dfrac{\partial r_{12}}{\partial x} - (u_2wx + u_2)f_1.
\end{eqnarray*}

Suppose that $\varphi_2(r_1) = 0$. By Lemma \ref{commondiv}, $\partial u_2 / \partial x, \partial r_{12}/ \partial x = 0$. Thus we have the following equation
\[
 w + wxw + \cdots + (wx)^{n-1}w - wx f_1 - f_1 = 0,
\]
which has a solution. Indeed, we may put $n=2$ and $f_1 = w$.

It follows that we get the following set of groups $G = \langle x, y_1,\ldots,y_\ell \, |\, r_{11} = r_{12}, \ldots, r_{1m} = r_{2m} \rangle$, where $r_{i1} = u_i wxwxw u_i'$, $r_{i2} = v_i$, $1\le i \le m$, here the $w$ and all the words $u_i,, u_i',v_i$ do not involve $x$, $x^{-1}$, and they each of them does not contain as a subword another word. Thus, we have got the set of groups (\ref{G1}).

Assume now that $\varphi_2(r_1) \ne 0$ and $\mathrm{LT}(\varphi_2(r_1)) = \mathrm{LT}(\partial r_{12}/ \partial x) = v_1\bar f$. Then $r_{12} = v_1\bar f x v_1'$, where $v_1'$ does not involve $x$, $x^{-1}$.

We have
\begin{eqnarray*}
 \varphi_3(r_1) &: =& -\varphi_2(r_1)-v_1f \\
 &=& -\dfrac{\partial u_2}{\partial x} - u_2 \dfrac{\partial \bar f}{\partial x} + (u_2wx + u_2)f_1 \\
 && \dfrac{\partial v_1}{\partial x} + v_1 \dfrac{\partial \bar f}{\partial x} + v_1\bar f - v_1f \\
 &=& \dfrac{\partial v_1}{\partial x} -\dfrac{\partial u_2}{\partial x} - u_2 \dfrac{\partial \bar f}{\partial x} + (u_2wx + u_2)f_1 + v_1 \dfrac{\partial \bar f}{\partial x} - v_1f_1.
\end{eqnarray*}

If we put $\varphi_3(r_1) = 0$ then by Lemma \ref{commondiv}, $\partial v_1 / \partial x, \partial u_2 / \partial x = 0$ because of $u_2$, $v_1$ have no common left divisors by assumption. Thus, to have a common divisor for $\partial r_i / \partial x$ we have to put $\varphi_3(r_1) \ne 0$ and $\mathrm{LT}(\varphi_3(r_1)) = \mathrm{LT}(v_1 \partial \bar f / \partial x) = v_2 \bar f$. Similarly, as for $\varphi_2(r_1)$, we then obtain
\[
  \varphi_4(r_1) = \dfrac{\partial v_2}{\partial x} -\dfrac{\partial u_2}{\partial x} - u_2 \dfrac{\partial \bar f}{\partial x} + (u_2wx + u_2)f_1 + v_2 \dfrac{\partial \bar f}{\partial x} - (v_2wx + v_2)f_1.
\]

Hence the equation has a solution. Indeed, we may put $\partial v_2/ \partial x, \partial u_2 / \partial x = 0$, $\bar f = (wx)^2w$, $f_1 = w$. Thus we have the same groups are described by (\ref{G1}).
\end{proof}

\subsection{A monomial $\bar f$ does not involve $x^{\pm 1}$}

We then have $\partial r_i / \partial x = \partial u_i / \partial x - \partial r_{i2} / \partial x$, and if we put $\mathrm{LT}(\partial r_i / \partial x) = \mathrm{LT}(\partial u_{i1} / \partial x) = u_{i2}\bar f$. It follows that $u_{i1} = u_{i2}\bar f x u_{i2}'$ where $u_{i2}'$ does not involve $x$, $x^{-1}$. It is easy to see that it is impossible.

Indeed,we have
\begin{eqnarray*}
  \varphi_1(r_i) &=& \dfrac{\partial r_{i}}{\partial x} - u_{i2} f \\
  &=& \dfrac{\partial u_{i2}}{\partial x} + u_{i2}\bar f - \dfrac{\partial r_{i2}}{\partial x} - u_{i2}f \\
  &=& \dfrac{\partial u_{i2}}{\partial x}  - \dfrac{\partial r_{i2}}{\partial x} - u_{i2}f_1,
\end{eqnarray*}
hence by Lemma \ref{commondiv}, $\varphi_k(r_i) \ne 0$ for $k\ge 1$.

\section*{Conclusion}
We have seen that homological point of view on the Kaplansky's zero divisor conjecture is very useful and very easy for understanding. The author is going to study when this groups have no a torsion and when $\pi_2(\mathcal{K}_{G_{\ell,n}}) \ne 0$ in the future papers.

\newpage

\end{document}